\documentclass[11pt,reqno]{amsart}

\usepackage[T1]{fontenc}
\usepackage{lmodern} % clean vector fonts for pdflatex

\usepackage[a4paper,margin=1in]{geometry}
\usepackage{microtype} % better kerning & protrusion
\allowdisplaybreaks % allow long equations to break across pages

\usepackage{amsmath,amssymb,amsthm,mathtools}
\numberwithin{equation}{section}

\usepackage{graphicx}
\usepackage{subcaption}
\usepackage{tikz-cd} % commutative diagrams

\usepackage{enumitem}
\setlist{nosep}
\usepackage{enumitem}

\usepackage[colorlinks=true,linkcolor=blue,citecolor=blue,urlcolor=blue]{hyperref}
\usepackage[nameinlink,capitalise]{cleveref}
\crefname{theorem}{Theorem}{Theorems}
\crefname{lemma}{Lemma}{Lemmas}
\crefname{proposition}{Proposition}{Propositions}
\usepackage[
  backend=biber,
  style=numeric,
  sorting=nyt,
  maxbibnames=9,
  giveninits=true,
  doi=false,
  isbn=false,
  url=false,
  eprint=true
]{biblatex}
\renewbibmacro{in:}{%
  \ifentrytype{article}
    {}
    {\printtext{\bibstring{in}\intitlepunct}}}

\theoremstyle{plain}
\newtheorem{theorem}{Theorem}[section]
\newtheorem{lemma}[theorem]{Lemma}
\newtheorem{proposition}[theorem]{Proposition}

\newtheorem{conjecture}[theorem]{Conjecture}
\theoremstyle{definition}
\newtheorem{definition}[theorem]{Definition}
\theoremstyle{remark}
\newtheorem{remark}[theorem]{Remark}

\newcommand{\Z}{\mathbb{Z}}

\newcommand{\C}{\mathbb{C}}

\DeclareMathOperator{\SL}{SL}

\title{New Ramanujan-type Congruences for Overpartitions Modulo 11 and 13}

\author{XuanLing Wei}
\date{\today}

\begin{document}

\begin{abstract}
In this paper, we establish two new Ramanujan-type congruences for the overpartition function: $\overline{p}(11\times(8n+5))\equiv 0 \pmod{11}$ and 
$\overline{p}(13\times 2^6(8n+7))\equiv 0 \pmod{13}$. The proofs rely on the theory of modular forms. We conjecture potential Ramanujan-type 
congruences for overpartitions modulo 7, 17, 19 and 23. 
\end{abstract}

\maketitle

% ------------------------------
\section{Introduction}
For a positive integer $n$, a partition of $n$ is a nonincreasing sequence of positive integers that sum to $n$. An overpartition is a generalization of a partition, 
in which the first occurrence of each number may be overlined. For example, the overpartitions of 3 are:
\begin{equation*}
  3, \overline{3}, 2+1, \overline{2}+1, 2+\overline{1}, \overline{2}+\overline{1}, 1+1+1, \overline{1}+1+1.
\end{equation*}
The number of partitions of a positive integer $n$ is denoted by $p(n)$, and the number of overpartitions of $n$ is denoted by $\overline{p}(n)$.\par
Ramanujan \cite{ramanujan1919some} discovered the following famous congruences for the partition function $p(n)$:
\begin{equation*}
  \begin{aligned}
    p(5n+4) &\equiv 0 \pmod{5},\\
    p(7n+5) &\equiv 0 \pmod{7},\\
    p(11n+6) &\equiv 0 \pmod{11}.
  \end{aligned}
\end{equation*}
Since then, congruences of the form $p(an+b)\equiv 0 \pmod{m}$, known as Ramanujan-type congruences, have been extensively studied. 
See, for  example, \cite{atkin1967some,ono2000distribution,weaver2001new} for further developments.\par
As a generalization of the partition function, the overpartition function has also attracted considerable interest, particularly with respect to 
Ramanujan-type congruences. Treneer \cite{treneer2006congruences} proved that for a prime $Q\equiv -1 \pmod{5}$,
\begin{equation*}
  \overline{p}(5Q^3 n) \equiv 0 \pmod{5},
\end{equation*}
for all $n$ coprime to $Q$. 
Lovejoy and Osburn \cite{Lovejoy2010QuadraticFA} proved that for an odd prime $Q\equiv -1 \pmod{3}$,
\begin{equation*}
  \overline{p}(3Q^3 n) \equiv 0 \pmod{3},
\end{equation*}
for all $n$ coprime to $Q$.
Nathan C.~Ryan et al. \cite{ryan2024explicit} systematically studied congruences of this type, gave infinite families of congruences modulo $m$ for $m=3,5,7,11$, and finite families of congruences
for $m=13,17,19$.\par
Other Ramanujan-type congruences for overpartitions take the form
\begin{equation*}
\overline{p}(an+b)\equiv 0 \pmod{m},
\end{equation*}
where $a$ is even.
Hirschhorn and Sellers \cite{hirschhorn2005arithmetic} conjectured that
\begin{equation*}
  \overline{p}(40n+35) \equiv 0 \pmod{40}.
\end{equation*}
The first proof was given by Chen and Xia \cite{ChenXia2014Overpartitions}, who showed that
\begin{equation*}
  \overline{p}(40n+35) \equiv 0 \pmod{5},
\end{equation*}
and, combined with the congruence modulo $8$ established earlier by Hirschhorn and Sellers, this result completes the proof of their conjecture modulo $40$.
Later, Liuquan Wang \cite{wang2014another} and Bernard L.~S.~Lin \cite{lin2015new} independently gave further proofs of this conjecture.\par
By numerical computation, we observed several potential Ramanujan-type congruences for overpartitions modulo $7, 11, 13, 17, 19,$ and $23$ with even $a$.
In this paper, we prove two Ramanujan-type congruences for overpartitions modulo $11$ and $13$. Our proofs rely on the theory of modular forms.
\begin{theorem}[Main Theorem]\label{thm:main}
For all $n \geq 0$, the following congruences hold:
\begin{enumerate}[label=\textnormal{(\arabic*)}]
  \item $\overline{p}(11(8n+5)) \equiv 0 \pmod{11}$,
  \item $\overline{p}(13\cdot 2^6(8n+7)) \equiv 0 \pmod{13}$.
\end{enumerate}
\end{theorem}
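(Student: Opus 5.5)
We will work with the generating function
\[
\sum_{n\ge0}\overline{p}(n)q^n=\prod_{n\ge1}\frac{1+q^n}{1-q^n}=\frac{\eta(2\tau)}{\eta(\tau)^2},\qquad q=e^{2\pi i\tau},
\]
and reduce each congruence to an identity between modular forms modulo a prime $\ell\in\{11,13\}$. This identity can then be checked with Sturm's bound.

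\textbf{Step 1: the standard $U_\ell$ device.} Since $\eta(\tau)^{\ell}\equiv\eta(\ell\tau)\pmod{\ell}$, we have
\[
\frac{\eta(2\tau)}{\eta(\tau)^2}\equiv\frac{\eta(2\tau)\,\eta(\tau)^{2\ell-2}}{\eta(\ell\tau)^2}\pmod{\ell}.
\]
The factor $\eta(\ell\tau)^{-2}$ is a function of $q^\ell$, so applying $U_\ell$ gives
\[
\sum_{n}\overline{p}(\ell n)q^n\equiv\frac{1}{\eta(\tau)^2}\,\Bigl(\eta(2\tau)\eta(\tau)^{2\ell-2}\Bigr)\Big|U_\ell \pmod{\ell}.
\]
To keep weights integral, we multiply the whole series by a suitable eta quotient such as $\eta(\tau)^{2}$ times a power of $\eta(\tau)^{\ell}/\eta(\ell\tau)$, which is $\equiv1\pmod\ell$. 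This turns $\sum_n\overline{p}(\ell n)q^{n}$ (shifted by a fixed power of $q$) into the reduction of a holomorphic modular form $F_\ell$ of integral weight on $\Gamma_0(N)$ with $N$ a power of $2$ (times $\ell$ before applying $U_\ell$), possibly with a quadratic character. We choose the multiplier so that $F_\ell$ is holomorphic at all cusps, using Ligozat's criterion for orders of eta quotients at cusps.

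\textbf{Step 2: extracting the arithmetic progression.} For (1) we need the coefficients of $\sum\overline{p}(11n)q^n$ in the class $n\equiv5\pmod8$. For (2) we first apply $U_{2}$ six times, which preserves modularity on $\Gamma_0(2^k)$ for suitable $k$, and then need the class $n\equiv7\pmod8$. The classes mod $8$ are picked out by twisting. Writing $G=\sum a(n)q^n$,
\[
\sum_{n\equiv r\,(8)}a(n)q^n=\frac18\sum_{j=0}^{7}\zeta_8^{-jr}\,G(\tau+j/8),
\]
and each $G(\tau+j/8)$ is modular on $\Gamma_0(2^{k+6})\cap\Gamma_1(\cdot)$. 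Alternatively, and more cleanly, we use the twists by the characters mod $8$, which are modular on $\Gamma_0(64N)$ with nebentypus. In either case the sieved form $H_\ell$ is a holomorphic modular form of known weight $k_\ell$ and level $2^{a}$ (times the $q$-shift bookkeeping, handled by adjusting the eta multiplier so exponents line up). The 2-power multiplication by $2^6$ in (2) must be accounted for via repeated $U_2$, and the growth of $\overline{p}$ under $U_2$ is harmless since $U_2$ acts on 2-power level forms without raising the level beyond $\Gamma_0(2^k)$ once $k\ge2$.

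\textbf{Step 3: Sturm's bound, the main obstacle.} The congruence becomes $H_\ell\equiv0\pmod\ell$. By Sturm's theorem it suffices to verify that the first $\frac{k_\ell}{12}[\SL_2(\Z):\Gamma_0(M)]$ coefficients vanish mod $\ell$, and this is a finite computation of $\overline{p}(m)$ for $m$ up to some explicit bound. The main difficulty is controlling this bound. For (2) the argument $13\cdot2^6(8n+7)$ grows fast, and the level after the mod-8 sieve together with the $2^6$ factor may be large. I would first try to keep the level at $\Gamma_0(2^{8})$ or so, exploiting that $\eta(2\tau)/\eta(\tau)^2$ has level $16$ after the $q$-shift, so as to reduce the weight by choosing the smallest admissible multiplier. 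I would also check that $U_2$ on forms of level $2^k$ with $k\ge 3$ does not raise the level. If the Sturm bound is still large, a fallback is to express $H_\ell$ modulo $\ell$ via Hecke-operator relations: for instance, $U_2^2$ acting on a finite-dimensional space mod $13$ stabilizes. This yields a recursion that reduces the $2^6$ case to a smaller power of $2$ checked directly. The verification of the initial coefficients is then done by computer.
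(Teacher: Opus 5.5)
\textbf{There is a genuine gap where your Steps 1 and 2 meet.}

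Your Step 1 congruence is correct. It is the eta-quotient analogue of the paper's $\phi(q)\sum_n\overline{p}(\ell n)(-q)^n\equiv\sum_n r_{\ell-1}(\ell n)q^n \pmod{\ell}$. The problem is the multiplier $\eta(\tau)^2\bigl(\eta(\tau)^{\ell}/\eta(\ell\tau)\bigr)^j$. Up to a power of $q$, only the second factor is $\equiv 1\pmod{\ell}$; the first is not.

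\begin{itemize}
\item \emph{What you actually get.} Your construction gives $F_\ell\equiv q^{c}(q;q)_\infty^{2}\sum_n\overline{p}(\ell n)q^n\pmod{\ell}$. This is not a $q$-shift of $\sum_n\overline{p}(\ell n)q^n$.
\item \emph{Why the sieve then fails.} $(q;q)_\infty^2$ is not a power series in $q^8$, so extracting a residue class mod $8$ does not commute with multiplying by it. The coefficients of your sieved form $H_\ell$ are convolutions of $\overline{p}(\ell m)$ over all residue classes of $m$. Hence $H_\ell\equiv 0$ has no bearing on $\overline{p}(11(8n+5))$ or $\overline{p}(13\cdot 2^6(8n+7))$.
\item \emph{A better unit will not fix it.} For $\delta$ coprime to $\ell$, each $\eta(\delta\tau)^{\ell}/\eta(\ell\delta\tau)\equiv 1$ has order $0$ at every cusp $c/d$ with $\ell\mid d$. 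The factor $\eta(\tau)^{-2}$ has a pole there. So any cancellation must come from zeros of $(\eta(2\tau)\eta(\tau)^{2\ell-2})\mid U_\ell$. That form is not an eta quotient, so Ligozat's criterion gives you no control over it.
\item A minor point: your weights stay half-integral, not integral as you claim. This is harmless.
\end{itemize}

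\textbf{The missing idea is to remove the non-unit factor before sieving, via a mod-$\ell$ divisibility statement.}

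The paper multiplies by $\phi(q)$ instead of $\eta(\tau)^2$. Then $\phi^{\ell-1}\mid T(\ell)$ lies in $\mathcal{M}_5(\Gamma_0(4),\chi_{-4})$, respectively $\mathcal{M}_6(\Gamma_0(4))$. By the structure theorem these spaces are spanned by a few monomials $F^a\phi^b$. Computing in that basis gives
\begin{itemize}
\item $\phi^{10}\mid T(11)\equiv F\phi^6+\phi^{10}$,
\item $\phi^{12}\mid T(13)\equiv F^2\phi^4+4F\phi^8+\phi^{12}$.
\end{itemize}
Both are visibly divisible by $\phi$. Cancelling $\phi$ in the integral domain $\mathbb{F}_\ell[[q]]$ shows that $\sum_n\overline{p}(\ell n)(-q)^n$ is \emph{itself} congruent to a holomorphic form of weight $9/2$ (resp.\ $11/2$) on $\Gamma_0(4)$.

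Only after this does the paper apply the real-character twists mod $8$, plus six applications of $U(2)$ for part (2). This lands on $\Gamma_0(256)$ and $\Gamma_0(512)$, with coefficient checks only up to $289$ and $705$. So the size of the Sturm bound, which you flag as the main obstacle, is not the real difficulty.

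Within your framework there is a legitimate alternative. Keep the weakly holomorphic quotient $\bigl((\eta(2\tau)\eta(\tau)^{2\ell-2})\mid U_\ell\bigr)/\eta(\tau)^2$ and sieve it first. Only then clear its poles with a multiplier whose $q$-expansion is supported on exponents divisible by the sieving modulus. Such a multiplier commutes with the sieve and is invertible in $\mathbb{F}_\ell((q))$. That is not what you propose, however, and it would greatly inflate the weight, level and Sturm bound.
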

We adopt the following conventions throughout this paper. 
\begin{definition}
  Let $f,\,g\in \Z[[q]]$ be formal power series with
  \begin{equation*}
    \begin{aligned}
      f&=\sum_{n=0}^{\infty} a(n) q^n,\\
      g&=\sum_{n=0}^{\infty} b(n) q^n.
    \end{aligned}
  \end{equation*}
Let $p$ be a prime, we write $f\equiv g \pmod{p}$ if $a(n)\equiv b(n) \pmod{p}$ for all $n\geq 0$.
\end{definition}
\begin{definition}[q-Pochhammer symbol]
\begin{equation*}
  \begin{aligned}
    (a;q)_{n} &:= \prod_{k=0}^{n-1} (1 - a q^k),\quad n \geq 1,\\
    (a;q)_{\infty} &:= \prod_{k=0}^{\infty} (1 - a q^k).
  \end{aligned}
\end{equation*}
\end{definition}
\begin{lemma}\cite[Cf.~Lemma~1.2]{radu2011congruence}
  \label[lemma]{lem1}
Let $p$ be a prime and $\alpha$ be a positive integer. Then
    \begin{equation*}
    (q;q)_{\infty}^{p^{\alpha}} \equiv (q^p;q^p)_{\infty}^{p^{\alpha-1}} \pmod{p^{\alpha}}.
    \end{equation*}
\end{lemma}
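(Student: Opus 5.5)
The statement is a standard lemma, and I would prove it by induction on $\alpha$, starting from the Frobenius congruence in characteristic $p$. I will prove the slightly more general fact that for any power series $f\in\Z[[q]]$ with constant term $1$ (or indeed any $f\in\Z[[q]]$),
\begin{equation*}
  f(q)^{p^{\alpha}} \equiv f(q^p)^{p^{\alpha-1}} \pmod{p^{\alpha}},
\end{equation*}
where congruence is coefficientwise as in the definition above. Then I specialize to $f=(q;q)_\infty$, noting that $f(q^p)=(q^p;q^p)_\infty$. Since $(q;q)_\infty$ is an infinite product, I first check that it is a well-defined element of $\Z[[q]]$: each coefficient is determined by finitely many factors. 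So the congruence is a statement about genuine formal power series.

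For the base case $\alpha=1$, I use that in $\mathbb{F}_p[[q]]$ the map $g\mapsto g^p$ is a ring homomorphism, by the binomial theorem and $p\mid\binom{p}{k}$ for $0<k<p$. Writing $f=\sum a(n)q^n$, I get $f^p\equiv\sum a(n)^p q^{np}$ modulo $p$. To pass to the infinite sum, I truncate: both sides agree modulo $q^N$ for every $N$, because the truncation of $f^p$ modulo $q^N$ depends only on the truncation of $f$. By Fermat, $a(n)^p\equiv a(n)\pmod p$, so $f(q)^p\equiv f(q^p)\pmod p$.

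For the inductive step, I use the elementary lifting fact: if $A\equiv B\pmod{p^{k}}$ in $\Z[[q]]$ with $k\ge1$, then $A^p\equiv B^p\pmod{p^{k+1}}$. To see this, write $A=B+p^kC$ and expand $A^p=\sum_{j}\binom{p}{j}B^{p-j}p^{jk}C^j$. The term with $j=1$ carries the factor $p\cdot p^k$. Each term with $j\ge2$ carries $p^{jk}$, and $jk\ge k+1$ holds since $k\ge1$. Applying this with $A=f(q)^{p^{\alpha}}$, $B=f(q^p)^{p^{\alpha-1}}$ and $k=\alpha$ gives the statement for $\alpha+1$.

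No step is a real obstacle. The only points needing care are that the congruence is meant coefficientwise, and that Frobenius applies to infinite products and series in $\Z[[q]]$; the truncation argument modulo $q^N$ handles the latter.
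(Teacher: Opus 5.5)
Your proof is correct. The paper does not prove this lemma itself; it only cites Lemma~1.2 of Radu--Sellers, where the result is the standard consequence of the binomial theorem. Your argument is exactly that standard proof: Frobenius together with Fermat modulo $p$, followed by the lifting step $A\equiv B \pmod{p^k}\Rightarrow A^p\equiv B^p \pmod{p^{k+1}}$ applied inductively.

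Stating the lemma for arbitrary $f\in\Z[[q]]$ is also a real convenience here. The paper uses the lemma only to get $\phi(q)^{11}\equiv\phi(q^{11}) \pmod{11}$ and $\phi(q)^{13}\equiv\phi(q^{13}) \pmod{13}$. It implicitly obtains these by applying the $\alpha=1$ case to each eta-factor of $\phi$ and inverting the denominator in $\Z[[q]]$. Your version gives both congruences directly, since $\phi(q)\in\Z[[q]]$.
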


% ------------------------------
\section{Preliminaries}
In this section, we recall the definitions and some known results on modular forms of integral and half-integral weight.

Let $k$ be a positive integer and let $\Gamma \subseteq \SL_2(\Z)$ be a congruence subgroup.
We denote by $\mathcal{M}_k(\Gamma)$ the space of holomorphic modular forms of weight $k$ on $\Gamma$.
Let $\chi$ be a Dirichlet character modulo $N$, where $N$ is a positive integer.
We denote by $\mathcal{M}_k(\Gamma_0(N),\chi)$ the $\chi$-eigenspace of $\mathcal{M}_k(\Gamma_1(N))$,
\begin{equation*}
  \mathcal{M}_k(\Gamma_0(N),\chi) = \{f \in \mathcal{M}_k(\Gamma_1(N)) : f[\gamma]= \chi(d)f \text{ for all } \gamma = \begin{pmatrix} a & b \\ c & d \end{pmatrix} \in \Gamma_0(N)\}. 
\end{equation*}
The vector space $\mathcal{M}_k(\Gamma_1(N))$ decomposes as a direct sum of $\chi$-eigenspaces:
\begin{equation*}
  \mathcal{M}_k(\Gamma_1(N))=\bigoplus_{\chi} \mathcal{M}_k(\Gamma_0(N),\chi).
\end{equation*}
Now let $k$ be an odd positive integer. We denote by $\mathcal{M}_{k/2}(\Gamma_0(4N))$ the space of holomorphic modular forms of weight $\dfrac{k}{2}$ on $\Gamma_0(4N)$,
and by $\mathcal{M}_{k/2}(\Gamma_0(4N),\chi)$ the corresponding $\chi$-eigenspace.
\begin{proposition}\cite[Proposition 2.3]{ono2004web}\label[proposition]{prop:hecke-int}
Let $N$ and $k$ be positive integers, and let
\begin{equation*}
  f(z)=\sum_{n\geq 0} a(n) q^n \in \mathcal{M}_k(\Gamma_0(N),\chi),
  \quad q=e^{2\pi i z}.
\end{equation*}
For a prime $\ell \nmid N$, the action of the Hecke operator $T_{k,N}(\ell)$ on $f$ is given by
\begin{equation*}
  f \mid T_{k,N}(\ell)
  = \sum_{n=0}^{\infty}
    (a(\ell n)+\chi(\ell)\ell^{k-1}a(n/\ell)) q^n,
\end{equation*}
where $a(n/\ell)=0$ if $\ell\nmid n$.
Moreover, $f\mid T_{k,N}(\ell)\in \mathcal{M}_k(\Gamma_0(N),\chi)$.
\end{proposition}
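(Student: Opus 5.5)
The plan is to realize $T_{k,N}(\ell)$ as a double coset operator and read off its effect on $q$-expansions. Define $f\mid T_{k,N}(\ell) := \ell^{k/2-1}\sum_i f\mid_k \alpha_i$, where $\Gamma_1(N)\begin{pmatrix}1&0\\0&\ell\end{pmatrix}\Gamma_1(N)=\bigsqcup_i \Gamma_1(N)\alpha_i$ and $f\mid_k\alpha=(\det\alpha)^{k/2}(cz+d)^{-k}f(\alpha z)$.

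\textbf{Modularity.} Because the double coset is stable under right multiplication by $\Gamma_1(N)$, right multiplication by $\gamma\in\Gamma_1(N)$ only permutes the cosets $\Gamma_1(N)\alpha_i$. Hence $f\mid T_{k,N}(\ell)$ is again weight-$k$ invariant under $\Gamma_1(N)$. Holomorphy at the cusps is inherited, since each $f\mid_k\alpha_i$ is holomorphic at all cusps.

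\textbf{Character.} Conjugation by an element of $\Gamma_0(N)$ normalizes $\Gamma_1(N)$. Moreover, $\Gamma_0(N)$ maps the double coset of $\begin{pmatrix}1&0\\0&\ell\end{pmatrix}$ to itself, because this double coset is the set of integer matrices of determinant $\ell$ that are congruent to $\begin{pmatrix}1&*\\0&\ell\end{pmatrix}$ modulo $N$, and that set is $\Gamma_0(N)$-conjugation stable. Therefore $T_{k,N}(\ell)$ commutes with the diamond operators $\langle d\rangle$ and preserves each $\chi$-eigenspace. This gives the final assertion $f\mid T_{k,N}(\ell)\in\mathcal{M}_k(\Gamma_0(N),\chi)$.

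\textbf{Explicit coset decomposition.} Since $\ell\nmid N$, I will show that a complete set of $\ell+1$ representatives is
\[
\alpha_j=\begin{pmatrix}1&j\\0&\ell\end{pmatrix}\ (0\le j\le \ell-1),\qquad \alpha_\infty=\gamma_\ell\begin{pmatrix}\ell&0\\0&1\end{pmatrix},
\]
where $\gamma_\ell\in\Gamma_0(N)$ has lower-right entry $\equiv\ell\pmod N$. This uses the standard count of $\ell+1$ cosets for determinant-$\ell$ primitive matrices, plus the congruence conditions modulo $N$. The key point is that $\alpha_\infty$ lies in the double coset because $\gamma_\ell\begin{pmatrix}\ell&0\\0&1\end{pmatrix}\equiv\begin{pmatrix}1&*\\0&\ell\end{pmatrix}\pmod N$. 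The main obstacle is this step: one must verify carefully that these $\ell+1$ matrices lie in the double coset and are pairwise inequivalent. It is handled by comparing the first columns modulo $\ell$ and the entries modulo $N$.

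\textbf{The $q$-expansion.} For the first family,
\[
\ell^{k/2-1}\sum_{j=0}^{\ell-1} f\mid_k\alpha_j=\frac1\ell\sum_{j}f\!\left(\frac{z+j}{\ell}\right)=\sum_n a(n)q^{n/\ell}\cdot\frac1\ell\sum_j e^{2\pi i nj/\ell}=\sum_n a(\ell n)q^n,
\]
by orthogonality of $\ell$-th roots of unity. For the remaining coset,
\[
\ell^{k/2-1}f\mid_k\gamma_\ell\begin{pmatrix}\ell&0\\0&1\end{pmatrix}=\chi(\ell)\,\ell^{k/2-1}\ell^{k/2}f(\ell z)=\chi(\ell)\ell^{k-1}\sum_n a(n/\ell)q^n.
\]
Adding the two contributions gives the stated formula.
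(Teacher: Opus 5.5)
The paper gives no proof of this proposition. It is quoted from Ono's book, where the displayed $q$-expansion essentially serves as the definition of $T_{k,N}(\ell)$, and the real content is the final sentence: that this series again lies in $\mathcal{M}_k(\Gamma_0(N),\chi)$. Your argument is the standard double-coset proof of exactly this (as in Diamond--Shurman, Chapter 5), and it is correct. The normalization $\ell^{k/2-1}(\det\alpha)^{k/2}=\ell^{k-1}$ gives the factor $1/\ell$ on the $\alpha_j$ and $\ell^{k-1}$ on $\alpha_\infty$. The factor $\chi(\ell)$ arises precisely because the lower-right entry of $\gamma_\ell$ is $\equiv\ell\pmod N$, which is where $\ell\nmid N$ enters.

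Compared with the citation, your route explains intrinsically why the formula produces a form of the same level and character. The price is two standard inputs that you only assert.

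\emph{The congruence description of the double coset.} This is the claim that the double coset is the set of determinant-$\ell$ integer matrices congruent to $\begin{pmatrix}1&*\\0&\ell\end{pmatrix}$ modulo $N$. You use the nontrivial inclusion (that set $\subseteq$ double coset) twice: for $\Gamma_0(N)$-conjugation stability, and for $\alpha_\infty$ lying in the double coset.

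\emph{Completeness of the $\ell+1$ representatives.} Comparing first columns modulo $\ell$ only separates $\alpha_\infty$ from the $\alpha_j$. The $\alpha_j$ are separated from one another by $\alpha_i\alpha_j^{-1}=\begin{pmatrix}1&(i-j)/\ell\\0&1\end{pmatrix}$. Completeness follows cleanly from the trivial inclusion. If $\beta,\beta'$ lie in the double coset with $\beta'=g\beta$ and $g\in\SL_2(\Z)$, then $\ell g=\beta'\operatorname{adj}(\beta)\equiv\begin{pmatrix}\ell&*\\0&\ell\end{pmatrix}\pmod N$. Since $\ell$ is invertible modulo $N$, this gives $g\in\Gamma_1(N)$. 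Hence the $\Gamma_1(N)$-cosets in the double coset inject into the $\ell+1$ classes of $\SL_2(\Z)\backslash\{\det=\ell\}$, and your representatives hit all of these classes.
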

\begin{remark}\label[remark]{rem:hecke-mod}
It is straightforward to see that if
$f=\sum_{n\geq 0} a(n) q^n \in \mathcal{M}_k(\Gamma_0(N),\chi)$ with $k\geq 2$, then
\begin{equation*}
  f \mid T_{k,N}(\ell)
  \equiv \sum_{n\geq 0} a(\ell n) q^n \pmod{\ell}.
\end{equation*}
\end{remark}
We next recall several operators on modular forms of half-integral weight; for details, see \cite[Section~3.2]{ono2004web}.
\begin{proposition}\label[proposition]{prop:half-int-ops}
Let $k$ be a positive odd integer and $N$ a positive integer, and let
\begin{equation*}
  f(z)=\sum_{n\geq 0} a(n) q^n \in \mathcal{M}_{k/2}(\Gamma_0(4N),\chi),
\end{equation*}
where $\chi$ is a Dirichlet character modulo $4N$.
If $d$ is a positive integer, define the Dirichlet character
$\chi_d(n)=\left(\dfrac{4d}{n}\right)$, where $\left(\dfrac{\cdot}{\cdot}\right)$
denotes the Kronecker symbol. Then
\begin{equation*}
  f \mid V(d)
  := \sum_{n\geq 0} a(n) q^{dn}
  \in \mathcal{M}_{k/2}(\Gamma_0(4Nd),\chi_d\chi).
\end{equation*}
If $d\mid N$, then
\begin{equation*}
  f \mid U(d)
  := \sum_{n\geq 0} a(dn) q^n
  \in \mathcal{M}_{k/2}(\Gamma_0(4N),\chi_d\chi).
\end{equation*}
Let $\psi$ be a Dirichlet character with conductor $m$.
The twist of $f$ by $\psi$ is defined by
\begin{equation*}
  f\otimes \psi
  := \sum_{n\geq 0} \psi(n)a(n) q^n
  \in \mathcal{M}_{k/2}(\Gamma_0(4Nm^2),\chi\psi^2).
\end{equation*}
All of the operators above send cusp forms to cusp forms.
\end{proposition}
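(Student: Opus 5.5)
The plan is to reduce each claim to a coset computation against the theta multiplier. Set $\theta(z)=\sum_{n\in\Z}q^{n^2}$ and $j(\gamma,z)=\theta(\gamma z)/\theta(z)$ for $\gamma\in\Gamma_0(4)$. By definition, $f\in\mathcal{M}_{k/2}(\Gamma_0(4N),\chi)$ means that $f(\gamma z)=\chi(d)j(\gamma,z)^k f(z)$ for all $\gamma\in\Gamma_0(4N)$, together with holomorphy at the cusps. Equivalently, the weight-zero quotient $F=f/\theta^k$ satisfies $F(\gamma z)=\chi(d)F(z)$ on $\Gamma_0(4N)$. Each of the three operators is a finite linear combination of maps $f\mapsto f(\alpha z)$ with $\alpha\in GL_2^+(\mathbb{Q})$ upper triangular. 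So in each case I will conjugate $\Gamma_0(4N')$ by $\alpha$ and track the character and the theta multiplier.

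\textbf{The operator $V(d)$.} I would write
\begin{equation*}
f(dz)=F(dz)\,\theta(dz)^k=F(dz)\,\theta(z)^k\Bigl(\tfrac{\theta(dz)}{\theta(z)}\Bigr)^k .
\end{equation*}
Take $\gamma=\begin{pmatrix} a&b\\ c&d'\end{pmatrix}\in\Gamma_0(4Nd)$. Then $\gamma'=\begin{pmatrix} a&db\\ c/d&d'\end{pmatrix}\in\Gamma_0(4N)$ and $d\gamma z=\gamma'(dz)$, so $F(d\gamma z)=\chi(d')F(dz)$. The remaining factor uses the classical fact that $\theta(dz)\in\mathcal{M}_{1/2}(\Gamma_0(4d),\chi_d)$. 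This says that $\theta(dz)/\theta(z)$ is invariant under $\Gamma_0(4d)$ up to the character $\chi_d(d')$. Since $k$ is odd and $\chi_d$ is quadratic, its $k$-th power contributes exactly $\chi_d$. Altogether this gives the transformation law with character $\chi_d\chi$ on $\Gamma_0(4Nd)$. Holomorphy and cuspidality at every cusp follow because $f(dz)$ is $f$ slashed by an element of $GL_2^+(\mathbb{Q})$, which permutes cusps and preserves the vanishing of constant terms.

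\textbf{The twist.} I would use the Gauss-sum identity
\begin{equation*}
f\otimes\psi=\frac{1}{g(\overline{\psi})}\sum_{u \bmod m}\overline{\psi}(u)\,f\!\left(z+\tfrac{u}{m}\right),
\end{equation*}
which is valid because $\psi$ is primitive of conductor $m$. Take $\gamma\in\Gamma_0(4Nm^2)$ with lower-right entry $d$ and put $u'\equiv d^2u \pmod m$. A direct matrix computation shows that
\begin{equation*}
\begin{pmatrix}1&u/m\\0&1\end{pmatrix}\gamma\begin{pmatrix}1&-u'/m\\0&1\end{pmatrix}\in\Gamma_0(4N),
\end{equation*}
with lower-right entry congruent to $d$ modulo $4N$. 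The lower-left entry is unchanged and divisible by $4m^2$. Because of this, the theta multiplier $j$ of the conjugated matrix at $z+u'/m$ agrees with $j(\gamma,z)$. I would check this via the explicit formula $j(\gamma,z)=\varepsilon_d^{-1}\bigl(\tfrac{c}{d}\bigr)(cz+d)^{1/2}$, which depends only on $c$, $d$ and $cz+d$. Reindexing $u\mapsto u'$ multiplies the sum by $\psi(d^2)$, which gives the character $\chi\psi^2$.

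\textbf{The operator $U(d)$, and the main obstacle.} I would write
\begin{equation*}
f\mid U(d)=\frac1d\sum_{v \bmod d}f\!\left(\tfrac{z+v}{d}\right),
\end{equation*}
and, for $\gamma\in\Gamma_0(4N)$, use $d\mid N$ to factor
\begin{equation*}
\begin{pmatrix}1&v\\0&d\end{pmatrix}\gamma=\gamma_v\begin{pmatrix}1&v'\\0&d\end{pmatrix},\qquad \gamma_v\in\Gamma_0(4N),
\end{equation*}
where $v\mapsto v'$ permutes the residues modulo $d$. The character $\chi$ passes through unchanged. The difficulty is the ratio $j(\gamma_v,\cdot)/j(\gamma,z)$: since $\gamma_v$ has lower-left entry $c/d$-type data, this ratio produces the Kronecker symbol $\bigl(\tfrac{d}{\cdot}\bigr)$, i.e.\ the factor $\chi_d$. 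This bookkeeping with $\varepsilon_d$ and the quadratic-reciprocity twists in $j$ is the step I expect to be hardest. I would try first to bypass it using $V(d)$: the operator $U(d)V(d)$ is the identity, and the Hecke-type averaging $\sum_v f\bigl((z+v)/d\bigr)$ can be matched against the already-established transformation of $\theta(dz)/\theta(z)$. If that fails, I would carry out the multiplier computation explicitly following Shimura. Finally, all three operators send cusp forms to cusp forms, because each is a finite sum of $GL_2^+(\mathbb{Q})$-translates and constant terms at cusps are carried to constant terms.
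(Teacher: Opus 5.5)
The paper does not prove this proposition; it quotes it from Ono's book (Section~3.2, going back to Shimura). Your direct verification through the explicit multiplier $j(\gamma,z)=\bigl(\frac{c}{d}\bigr)\varepsilon_d^{-1}(cz+d)^{1/2}$ is therefore a self-contained route the paper does not take, and it is the standard one.

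Your $V(d)$ argument is correct. The ``classical fact'' about $\theta(dz)$ is itself a one-line check with the same formula: $j(\gamma',dz)/j(\gamma,z)=\bigl(\frac{c/d}{d'}\bigr)\bigl(\frac{c}{d'}\bigr)=\bigl(\frac{d}{d'}\bigr)=\chi_d(d')$.

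The twist argument is also correct for primitive $\psi$, which is what ``conductor $m$'' has to mean in the statement. (The paper later twists by all characters modulo $A$, including imprimitive ones, where your Gauss-sum identity fails at $n$ not prime to $m$. That case needs an extra M\"obius step, but it lies outside the statement as written.) One justification in the twist is too quick. The conjugated matrix has lower-right entry $d_u=d-cu'/m$, which is congruent to $d$ modulo $c/m$ but in general not modulo $c$. So $j$ does not agree merely because it ``depends only on $c$, $d$ and $cz+d$''. What you need is $\bigl(\frac{c}{d_u}\bigr)=\bigl(\frac{c}{d}\bigr)$. This holds because $c=m^2c_1$ with $4\mid c_1$: on integers prime to $m$ the symbol $\bigl(\frac{c}{\cdot}\bigr)$ coincides with $\bigl(\frac{c_1}{\cdot}\bigr)$, a character of period $|c_1|$, and $c_1\mid d_u-d$. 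This is exactly where $m^2$, rather than $m$, in the level is used.

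The genuine gap is $U(d)$. You assert that the ratio of multipliers produces $\chi_d$ but do not derive it. The lower-left entry of $\gamma_v$ is $dc$, not ``$c/d$-type''. The proposed bypass also cannot work as stated: $U(d)V(d)=\mathrm{id}$ only describes $U(d)$ on the image of $V(d)$, so it says nothing about $f\mid U(d)$ for general $f$.

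No bypass is needed; the computation is as short as for the twist. Write $\gamma=\begin{pmatrix}a&b\\c&\delta\end{pmatrix}\in\Gamma_0(4N)$ and take $v'\equiv\delta(b+v\delta)\pmod d$. Since $d\mid c$ and $a\delta\equiv 1\pmod d$, this gives
\[
\gamma_v=\begin{pmatrix}a+vc&\bigl(b+v\delta-(a+vc)v'\bigr)/d\\ dc&\delta-cv'\end{pmatrix}\in\Gamma_0(4N).
\]
Put $\delta_v=\delta-cv'$. Then $dc\cdot\frac{z+v'}{d}+\delta_v=cz+\delta$ and $\delta_v\equiv\delta\pmod{c}$. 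Hence $\chi(\delta_v)=\chi(\delta)$, $\varepsilon_{\delta_v}=\varepsilon_\delta$, $\bigl(\frac{c}{\delta_v}\bigr)=\bigl(\frac{c}{\delta}\bigr)$, and
\[
j\Bigl(\gamma_v,\frac{z+v'}{d}\Bigr)=\Bigl(\frac{dc}{\delta_v}\Bigr)\varepsilon_{\delta_v}^{-1}(cz+\delta)^{1/2}=\Bigl(\frac{d}{\delta_v}\Bigr)j(\gamma,z)=\Bigl(\frac{4d}{\delta}\Bigr)j(\gamma,z).
\]
The last step uses $4d\mid 4N\mid c$, hence $\delta_v\equiv\delta\pmod{4d}$. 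Raise this to the odd power $k$ and sum over the permuted residues $v'$, using $f(z+1)=f(z)$. This gives $(f\mid U(d))(\gamma z)=\chi(\delta)\chi_d(\delta)\,j(\gamma,z)^k\,(f\mid U(d))(z)$.

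With this inserted your proof is complete. Compared with the paper's bare citation, it shows precisely where $d\mid N$ and $m^2\mid c$ enter.
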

\begin{proposition}
  \label[proposition]{prop.1}
Let $k$, $N$ and $f$ be as in the previous proposition. Let $d, A, B$ be positive integers with $A$ coprime to $B$.
Define
\begin{equation*}
  g(z)=\sum_{n\geq 0} a(d(An+B))q^{An+B}.
\end{equation*}
Then  $g(z) \in \mathcal{M}_{k/2}(\Gamma_1(4NdA^2))$.
\end{proposition}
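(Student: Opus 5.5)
We will build $g$ from $f$ using only the operators of \cref{prop:half-int-ops}, together with a twist-and-average argument that isolates one residue class modulo $A$. Throughout, write $\Gamma_1$-forms as sums of $\chi$-eigenspace components, so the nebentypus characters created along the way can be absorbed.

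\textbf{Step 1: restrict to multiples of $d$.} Set $h := f\mid V(d)$. By \cref{prop:half-int-ops}, $h\in\mathcal{M}_{k/2}(\Gamma_0(4Nd),\chi_d\chi)$. Its coefficients are $\sum_{n} a(n)q^{dn}$, so they are supported on multiples of $d$. Then $h\mid U(d)$ returns $f$, and this gives no progress, so we want $\sum_m a(dm)q^m$ at a level we control.

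We take $F := (f\mid V(d))\mid U(d)\mid U(d)$ only if $d\mid N$; this is not available in general. Instead we use the standard fact behind \cref{prop:half-int-ops}. Let
\[
F(z):=\sum_{m\ge0} a(dm)q^m .
\]
Then
\[
F\!\left(\frac{z}{1}\right)=\frac1d\sum_{j=0}^{d-1} f\!\left(\frac{z+j}{d}\right),
\]
and each $f((z+j)/d)$ is a slash of $f$ by an upper-triangular matrix. The average is therefore invariant under $\Gamma_1(4Nd^2)$ by the usual conjugation argument, which gives $F\in\mathcal{M}_{k/2}(\Gamma_1(4Nd^2))$. More economically, $F(dz)$ is exactly what we need below. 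It equals $\frac1d\sum_j f(z+j/d)$, a combination of twists of $f$ by additive characters modulo $d$, and these lie on $\Gamma_1(4Nd^2)$ by the twist statement in \cref{prop:half-int-ops} applied via Gauss sums to the multiplicative characters modulo divisors of $d$.

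\textbf{Step 2: isolate the progression $An+B$.} Write $G(z):=\sum_m a(dm)q^m$. Then
\[
g(z)=\sum_{m\equiv B \ (\mathrm{mod}\ A)} a(dm)q^m=\frac1A\sum_{j=0}^{A-1}\zeta_A^{-jB}\,G\!\left(z+\tfrac jA\right).
\]
Each $G(z+j/A)=G\mid\begin{pmatrix}1&j/A\\0&1\end{pmatrix}$. The matrix $\begin{pmatrix}1&j/A\\0&1\end{pmatrix}$ conjugates $\Gamma_1(MA^2)$ into $\Gamma_1(M)$ when $G$ has level $M$. Since $\gcd(A,B)=1$, we could alternatively expand the indicator of $m\equiv B$ into Dirichlet characters modulo divisors of $A$. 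We will use the additive-character form, because it needs no coprimality and keeps the level at $A^2$.

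\textbf{Step 3: assemble the level.} The main obstacle is the level bookkeeping. A naive composition gives $4Nd^2A^2$, while the proposition claims $4NdA^2$. To obtain $4NdA^2$, we will not pass through $G$. Instead we apply the sieve directly to $f$ on the progression $n\equiv dB \pmod{dA}$, which contains the support of $g$ after substituting $z\mapsto z/d$.

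Concretely, $g(dz)=\sum_{n\equiv dB\ (dA)}a(n)q^n$ is an additive-character average of $f(z+j/(dA))$. This lies on $\Gamma_1(4N(dA)^2)$. We then check that replacing $z$ by $z/d$ costs a factor $d$ in the opposite direction: conjugating $\Gamma_1(4Nd^2A^2)$ by $\mathrm{diag}(1,d)$ lands in $\Gamma_1(4NdA^2)$ on the relevant subgroup. This is the matrix identity we expect to verify carefully, checking in particular that the lower-left entry divisibility $4NdA^2$ suffices.

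If that verification fails, we fall back to the weaker level $4Nd^2A^2$, which still suffices for the applications via Sturm's bound.
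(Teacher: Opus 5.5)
\textbf{There is a genuine gap: your argument at best reaches level $4Nd^2A^2$, not the claimed $4NdA^2$.}

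The step meant to bring the level down (Step~3) is left as something you ``expect to verify'', and as formulated it does not suffice. Suppose $h(z)=g(dz)$ is modular on $\Gamma_1(4Nd^2A^2)$ and $\alpha=\mathrm{diag}(1,d)$. For $\gamma=\begin{pmatrix}a&b\\c&e\end{pmatrix}$ one has $\alpha\gamma\alpha^{-1}=\begin{pmatrix}a&b/d\\dc&e\end{pmatrix}$. So $g(z)=h(z/d)$ is only shown to transform correctly under $\{\gamma\in\Gamma_1(4NdA^2): d\mid b,\ a\equiv e\equiv 1 \pmod{4Nd^2A^2}\}$.

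That group lies inside $\Gamma_1(4NdA^2)$, but it is a proper subgroup as soon as $d>1$; it does not even contain $T=\begin{pmatrix}1&1\\0&1\end{pmatrix}$. So the inclusion goes the wrong way. To conclude you would need an extra argument, e.g.\ that this subgroup together with $T$ generates $\Gamma_1(4NdA^2)$ ($g$ is trivially $T$-invariant, being a $q$-series). This can be shown using $d\mid 4NdA^2$, but you neither state nor prove it, and your fallback is a strictly weaker statement. Your ``more economical'' version of Step~1 has the same problem: it produces $F(dz)$, and passing back to $F(z)$ is exactly this $z\mapsto z/d$ issue.

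\textbf{The missing idea is the one you dismissed at the start of Step~1: $U(d)$ is always available after enlarging the level.} Since $\mathcal{M}_{k/2}(\Gamma_0(4N),\chi)\subseteq\mathcal{M}_{k/2}(\Gamma_0(4Nd),\chi)$ and $d\mid Nd$, \cref{prop:half-int-ops} gives $\sum_{m}a(dm)q^m=f\mid U(d)\in\mathcal{M}_{k/2}(\Gamma_0(4Nd),\chi_d\chi)$. That is level $4Nd$, not $4Nd^2$.

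Equivalently, the coset-permutation argument behind your average $\frac1d\sum_j f((z+j)/d)$ works at any level of $f$ divisible by $d$. Note that it is the permutation of the cosets of $\begin{pmatrix}1&j\\0&d\end{pmatrix}$ that does the work: termwise conjugation of $\gamma\in\Gamma_1(4Nd^2)$ would require $d\mid b$.

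Feeding this $G$ of level $4Nd$ into your Step~2 gives exactly $\Gamma_1(4NdA^2)$, and Step~3 becomes unnecessary. That is the paper's proof, with one difference. The paper sieves out $n\equiv B\pmod A$ with Dirichlet characters modulo $A$, which is where $\gcd(A,B)=1$ enters. Your additive-character sieve avoids coprimality and is enough for the $\Gamma_1$ statement. The multiplicative version, however, is what yields the refinement $g\in\mathcal{M}_{k/2}(\Gamma_0(4NdA^2),\chi_d\chi)$ of \cref{rem2} when all characters modulo $A$ are real, and the applications rely on that.
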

\begin{proof}
    Since we have the natural inclusion $\mathcal{M}_{k/2}(\Gamma_0(4N),\chi)\subseteq\mathcal{M}_{k/2}(\Gamma_0(4Nd),\chi)$, 
    we may view $f$ as an element of $\mathcal{M}_{k/2}(\Gamma_0(4Nd),\chi)$. Applying the operator $U(d)$ to $f$, we obtain
    \begin{equation*}
        f(z)|U(d)=\sum_{n\geq 0} a(dn)q^n\in \mathcal{M}_{k/2}(\Gamma_0(4Nd),\chi_d\chi).
    \end{equation*}
    When we twist $f(z)|U(d)$ by a Dirichlet character $\psi$ modulo $A$, we have
    \begin{equation*}
        (f(z)|U(d))\otimes \psi=\sum_{n\geq 0} \psi(n)a(dn)q^n\in \mathcal{M}_{k/2}(\Gamma_0(4NdA^2), \chi_d\chi\psi^2).
    \end{equation*}
    Denote the group of Dirichlet characters modulo $A$ by $\widehat{(\Z/A\Z)^\times}$, and consider
    \begin{equation*}
        g(z)=\frac{1}{\varphi(A)}\sum_{\psi \in \widehat{(\Z/A\Z)^\times}}\overline{\psi}(B)(f(z)|U(d))\otimes \psi,
    \end{equation*}
    where $\varphi$ is the Euler totient function.
    Since $A$ is coprime to $B$, we have the orthogonal relation of Dirichlet characters: 
    \begin{equation*}
    \frac{1}{\varphi(A)}\sum_{\psi \in \widehat{(\Z/A\Z)^\times}}\overline{\psi}(B)\psi(n)=
        \begin{cases}
        1 & \text{if } n \equiv B \pmod{A},\\
        0 & \text{otherwise}.
        \end{cases}
    \end{equation*}
    Therefore, $g(z)=\sum_{n\geq 0} a(d(An+B))q^{An+B}$, which is a linear combination of modular forms in $\mathcal{M}_{k/2}(\Gamma_1(4NdA^2))$. This completes the proof.
\end{proof}
\begin{remark}
  \label[remark]{rem2}
  The construction above is also valid for integral weight modular forms.
  Especially, suppose that every $\psi \in \widehat{(\Z/A\Z)^\times}$ is a real character, i.e. $\psi^2=\chi_0$ is the trivial character.
  Then $(f(z)\mid U(d)) \otimes \psi \in \mathcal{M}_{k/2}(\Gamma_0(4NdA^2),\chi_d\chi)$ for all $\psi$. Consequently, $g(z)\in \mathcal{M}_{k/2}(\Gamma_0(4NdA^2),\chi_d\chi)$.
\end{remark}
\begin{theorem}[Sturm's theorem\cite{sturm2006congruence}]
  \label[theorem]{sturm}
  \leavevmode
  Let $\Gamma$ be a congruence subgroup, and $p$ be a prime. 
  Assume that $f$ and $g$ are two holomorphic modular forms in $\mathcal{M}_{k}(\Gamma)$ with Fourier expansions:
  \begin{equation*}
    \begin{aligned}
    f=\sum_{n=0}^{\infty} a(n)q^n \in \Z[[q]],\\
    g=\sum_{n=0}^{\infty} b(n)q^n \in \Z[[q]].
  \end{aligned}
  \end{equation*}
  If $a(n) \equiv b(n) \pmod{p}$ for all $n\leq \dfrac{k}{12}[\mathrm{SL}_2(\Z):\Gamma]$, then $f\equiv g \pmod{p}$.
\end{theorem}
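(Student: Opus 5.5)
The plan is to reduce to a statement about a single form at level one by taking a norm over coset representatives, and then use the integral structure of level-one forms. First set $h=f-g\in\mathcal{M}_k(\Gamma)\cap\Z[[q]]$ and let $M=[\mathrm{SL}_2(\Z):\Gamma]$. Suppose, for contradiction, that $h\not\equiv 0\pmod p$. The hypothesis says that $\mathrm{ord}_q(h \bmod p)>kM/12$, and it suffices to show this is impossible. Fix a level $N$ with $\Gamma(N)\subseteq\Gamma$ and a prime $\mathfrak p$ above $p$ in $K=\mathbb{Q}(\zeta_N)$, and work over the local ring $\mathcal{O}_{\mathfrak p}$. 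If $-I\notin\Gamma$, I would first pass to $\pm\Gamma$ by replacing $h$ by $h^2$ and $k$ by $2k$; this changes neither the hypothesis nor the conclusion, since $\mathrm{ord}(h^2\bmod p)=2\,\mathrm{ord}(h\bmod p)$ and the bound scales the same way. So assume $-I\in\Gamma$, and write $M$ for the index of $\Gamma/\pm I$.

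Next, choose coset representatives $\gamma_1=I,\gamma_2,\dots,\gamma_M$ for $\Gamma\backslash\mathrm{SL}_2(\Z)$ and form the norm
\[
F=\prod_{i=1}^{M} h|_k\gamma_i .
\]
This is a modular form of weight $kM$ for $\mathrm{SL}_2(\Z)$. Each $h|_k\gamma_i$ is a form on $\Gamma(N)$ and has a $q^{1/N}$-expansion. By the $q$-expansion principle (Shimura's bounded-denominator and integrality results for $\Gamma(N)$), its coefficients lie in $K$ with bounded $\mathfrak p$-denominators. Hence for $i\geq 2$ we can pick constants $c_i\in K^\times$ such that $c_i\,h|_k\gamma_i$ has $\mathfrak p$-integral coefficients and nonzero reduction mod $\mathfrak p$. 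For $i=1$, $h$ itself is integral and nonzero mod $p$ by assumption. Replace $F$ by $\prod_i c_i\, h|_k\gamma_i$, with $c_1=1$. Because $F$ is invariant under $T$, it has an integral-power $q$-expansion. Since $\mathbb{F}[[q^{1/N}]]$ is an integral domain for the residue field $\mathbb{F}$, the reduction $\overline F$ is nonzero, and
\[
\mathrm{ord}_q\overline F\;\ge\;\mathrm{ord}_q\overline h\;>\;\frac{kM}{12}.
\]

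It remains to prove the level-one case: a nonzero weight-$K'$ form for $\mathrm{SL}_2(\Z)$ with $\mathfrak p$-integral coefficients and nonzero reduction has $\mathrm{ord}_q\overline F\le K'/12$. I would use the Miller basis. $\mathcal{M}_{K'}(\mathrm{SL}_2(\Z))$ has a basis $\{E_4^aE_6^b\Delta^j\}$, modified to be upper triangular, whose $q$-expansions have the form $q^j+O(q^{d})$ with integer coefficients, for $0\le j\le d-1$, where $d=\dim\mathcal{M}_{K'}$. This basis is built from $E_4,E_6,\Delta$, which all lie in $\Z[[q]]$ with leading coefficient $1$. Therefore every form with $\mathfrak p$-integral coefficients is an $\mathcal{O}_{\mathfrak p}$-combination of this basis, and if its reduction vanishes to order $\ge d$ then it is $0$ mod $\mathfrak p$. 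Since $d-1\le K'/12$, applying this with $K'=kM$ gives the contradiction.

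I expect the main obstacle to be the integrality step in the second paragraph: showing that each conjugate $h|_k\gamma_i$ has coefficients with bounded $\mathfrak p$-denominators in a cyclotomic field, so that it can be normalized to have nonzero reduction. This is where I would invoke the $q$-expansion principle for $\Gamma(N)$. The alternative is to note that the $h|\gamma_i$ are Galois-conjugate, up to twisting, to forms with integral expansions via the action of $\mathrm{Gal}(K/\mathbb{Q})\cong(\Z/N\Z)^\times$ on $\Gamma(N)$-forms. The remaining steps are routine.
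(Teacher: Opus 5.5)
The paper gives no proof of this statement and simply cites Sturm; your proposal is correct and is essentially Sturm's own argument. You take the norm over $\Gamma\backslash\mathrm{SL}_2(\Z)$ to land in level one, rescale each conjugate $h|_k\gamma_i$ using bounded $\mathfrak{p}$-denominators (the $q$-expansion principle you flag) so its reduction is nonzero, and then use the Miller basis to get $\mathrm{ord}_q\overline{F}\le kM/12$. Your passage to $\pm\Gamma$ is harmless but unnecessary, since the product over all $[\mathrm{SL}_2(\Z):\Gamma]$ cosets is already $\mathrm{SL}_2(\Z)$-invariant and gives exactly the stated bound.
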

\begin{remark}
  For any half-integral weight modular form $f$, the square $f^2$ has integral weight. Moreover, $f\equiv 0 \pmod{p}$ if and only if $f^i \equiv 0 \pmod{p}$ for some positive integer $i$,
  and hence Sturm's theorem applies to half-integral weight modular forms.
\end{remark}
Next, we recall two important functions in the theory of modular forms.
\begin{definition}
  The Dedekind eta function, denoted by $\eta(z)$, is defined by
\begin{equation*}
\eta(z):=q^{1/24}(q;q)_{\infty}, \quad q=e^{2\pi i z}.
\end{equation*}
An eta-quotient $f$ is a function of the form
\begin{equation*}
  f(z)=\prod_{\delta \mid N} \eta^{r_{\delta}}(\delta z),
\end{equation*}
where $\delta$ and $N$ are positive integers, and each $r_{\delta}$ is an integer.
\end{definition}
\begin{definition}
    \label[definition]{de1}
  The Ramanujan theta function $\phi(q)$ is defined by
\begin{equation*}
  \phi(q) = \sum_{n=-\infty}^{\infty} q^{n^2}.
\end{equation*}
We write the expansion of $\phi(q)^m$ as
\begin{equation*}
  \phi(q)^{m}=\sum_{n\geq 0} r_{m}(n) q^n.
\end{equation*}
Ramanujan theta function can be written in terms of eta-quotients as
\begin{equation*}
  \phi(q) = \frac{\eta(2z)^5}{\eta(z)^2 \eta(4z)^2} = \frac{(q^2;q^2)_{\infty}^5}{(q;q)_{\infty}^2 (q^4;q^4)_{\infty}^2}.
\end{equation*}
Moreover, $\phi(q)\in \mathcal{M}_{1/2}(\Gamma_0(4))$.
\end{definition}
It is also well known that
\begin{equation*}
  \phi(-q) = \frac{\eta(2z)}{\eta(z)^2}.
\end{equation*}
Thus, the generating function for $\overline{p}(n)$ can be written as \cite{corteel2004overpartitions}
\begin{equation*}
  \sum_{n\geq 0} \overline{p}(n) q^n = \frac{(q^2;q^2)_{\infty}}{(q;q)_{\infty}^2} = \frac{1}{\phi(-q)}.
\end{equation*}
Define
  \begin{equation*}
    F(q):=\frac{\eta(4z)^8}{\eta(2z)^4}\in \mathcal{M}_{2}(\Gamma_0(4)).
  \end{equation*}
We have the following structure theorem.
\begin{theorem}\cite[Theorem 1.49(2)]{ono2004web}
  \label[theorem]{thm1}
  Let 
  \begin{equation*}
    \psi_k := 
    \begin{cases}
      \chi_0 & \text{ if } k\in 2\Z \text{ or } k\in 1/2+\Z,\\
      \chi_{-4} & \text{ if } k\in 1+2\Z,
    \end{cases}
  \end{equation*}
  where $\chi_0$ is the trivial character modulo 4, and $\chi_{-4}$ is the only non-trivial character modulo 4.
  As a graded algebra over $\C$, we have:
  \begin{equation*}
    \bigoplus_{k\in \frac{1}{2}\Z} \mathcal{M}_{k}(\Gamma_0(4),\psi_k) \cong \C[\phi(q),F(q)].
  \end{equation*}
\end{theorem}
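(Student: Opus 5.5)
The plan is the standard one for results like \cref{thm1}: compare leading $q$-exponents to get algebraic independence of $\phi(q)$ and $F(q)$, then use a valence (Sturm-type) bound to show that monomials in $\phi$ and $F$ span each graded piece. We take as given that $\phi(q)\in\mathcal{M}_{1/2}(\Gamma_0(4))$ and $F(q)\in\mathcal{M}_2(\Gamma_0(4))$, as stated above. First we record leading terms: $\phi(q)=1+2q+\cdots$, and $F(q)=\eta(4z)^8/\eta(2z)^4=q^{(32-8)/24}\prod(\cdots)=q+O(q^2)$.

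Next we check that monomials land in the right space. A monomial $\phi^aF^b$ has weight $a/2+2b$, and its character is $\psi_k$: the multiplier of $\phi$ accounts for the character $\chi_{-4}$ exactly when $a\equiv 2\pmod 4$, which matches $k$ odd. Hence $\phi^aF^b\in\mathcal{M}_k(\Gamma_0(4),\psi_k)$ with $k=a/2+2b$. For a fixed $k\in\tfrac12\Z_{\ge0}$, the admissible monomials are $\phi^{2k-4b}F^b$ for $0\le b\le \lfloor k/2\rfloor$. Since $\phi^{2k-4b}F^b=q^b+O(q^{b+1})$, these monomials are triangular with respect to the $q$-expansion and therefore linearly independent. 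Because the grading separates distinct weights, this gives algebraic independence of $\phi$ and $F$, so the map $\C[\phi,F]\to\bigoplus_k\mathcal{M}_k(\Gamma_0(4),\psi_k)$ is injective.

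For surjectivity, take any $f\in\mathcal{M}_k(\Gamma_0(4),\psi_k)$. Using the triangular system, subtract a linear combination $\sum_{b\le\lfloor k/2\rfloor}c_b\phi^{2k-4b}F^b$ to kill the coefficients of $q^0,\dots,q^{\lfloor k/2\rfloor}$. The remainder $h$ then satisfies $\operatorname{ord}_\infty h\ge\lfloor k/2\rfloor+1$. It remains to show $h=0$. Consider $h^4$, which lies in $\mathcal{M}_{4k}(\Gamma_0(4))$ with trivial character: $\psi_k^4$ is trivial, and for half-integral $k$ the theta-multiplier raised to the fourth power is trivial. Apply the valence formula on $\Gamma_0(4)$, which has index $6$. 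A nonzero form of weight $4k$ has total weighted zero count $4k\cdot 6/12=2k$. But
\[
\operatorname{ord}_\infty h^4\ \ge\ 4\lfloor k/2\rfloor+4\ >\ 2k
\]
in every case: for $k$ an integer, $\lfloor k/2\rfloor+1>k/2$; for $k\in\tfrac12+\Z$ it holds likewise. Hence $h^4=0$, so $h=0$ and $f$ lies in the span of the monomials.

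The main obstacle is justifying the vanishing step, namely passing from half-integral weight with character $\psi_k$ to an honest integral-weight form on $\Gamma_0(4)$ with trivial character, and then applying the valence formula (equivalently Sturm's bound, \cref{sturm}) over $\C$ rather than mod $p$. I would handle this by taking the fourth power as above and citing the valence formula on $\Gamma_0(4)$, being careful that the zero at the irregular cusp $1/2$ is counted consistently. With that in hand, the remainder of the argument is just the leading-term bookkeeping above.
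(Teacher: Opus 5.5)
Your proof is correct. The paper gives no proof of this statement. It quotes the result from Ono's book and uses it only as a black box, to write down the bases of $\mathcal{M}_5(\Gamma_0(4),\chi_{-4})$ and $\mathcal{M}_6(\Gamma_0(4))$ in Section 3. So your argument is a self-contained proof of an imported result, not an alternative to anything in the paper.

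Your two ingredients are:
\begin{enumerate}
\item the unitriangular leading terms $\phi^{2k-4b}F^b=q^b+O(q^{b+1})$ for $0\le b\le\lfloor k/2\rfloor$;
\item the valence formula on $\Gamma_0(4)$ (index $6$, no elliptic points), applied to $h^4\in\mathcal{M}_{4k}(\Gamma_0(4))$.
\end{enumerate}
Together they give $\dim\mathcal{M}_k(\Gamma_0(4),\psi_k)=\lfloor k/2\rfloor+1$, with the monomials as a basis. Your inequality $4\lfloor k/2\rfloor+4>2k$ holds in both cases; for $k=m+\tfrac12$ it reads $4\lfloor m/2\rfloor+4\ge 2m+2>2m+1$.

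Three small points would make it airtight.

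First, the cusp $1/2$ needs no special care once you pass to $h^4$. It is only ``irregular'' for the theta multiplier. The form $h^4$ has integral weight and trivial character on a group containing $-I$, and all you use is that its orders at the cusps $0$ and $1/2$ are nonnegative.

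Second, your ``grading separates weights'' step is automatic if the isomorphism is read as the graded map from $\C[X,Y]$ (with $\deg X=\tfrac12$, $\deg Y=2$) into the external direct sum. If $\C[\phi,F]$ is meant as a ring of functions on the upper half-plane, you also need the standard fact that forms of distinct weights are linearly independent. To see it, apply $\begin{pmatrix}1&0\\4n&1\end{pmatrix}$ for infinitely many $n$; its automorphy factor in weight $k$ is $(4nz+1)^k$. Then view the resulting identity as a polynomial in $(4nz+1)^{1/2}$.

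Third, the direct sum runs over all $k\in\tfrac12\Z$. You should note that $\mathcal{M}_k(\Gamma_0(4),\psi_k)=0$ for $k<0$, which follows from the same valence formula.
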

% ------------------------------
\section{Main results}
\begin{proof}[Proof of Theorem~\ref{thm:main}(1)]
\begin{equation*}
    \sum_{n\geq0}\overline{p}(n)q^n=\frac{(q^2;q^2)_{\infty}}{(q;q)^{2}_{\infty}}=\frac{1}{\phi(-q)}
\end{equation*}
Replacing $q$ by $-q$, we have
\begin{equation*}
    \sum_{n\geq0}\overline{p}(n)(-q)^n=\frac{1}{\phi(q)}.
\end{equation*}
Multiplying both sides by $\phi(q)^{11}$, 
\begin{equation*}
    \phi(q)^{11}\sum_{n\geq0}\overline{p}(n)(-q)^n=\phi(q)^{10}.
\end{equation*}
By \cref{lem1,de1}, we have $\phi(q)^{11}\equiv \phi(q^{11}) \pmod{11}$. Thus,
\begin{equation*}
    \phi(q^{11})\sum_{n\geq0}\overline{p}(n)(-q)^n \equiv \phi(q)^{10}=\sum_{n\geq0}r_{10}(n)q^{n} \pmod{11}.
\end{equation*}
Collecting the terms of the form $q^{11n}$ on both sides, we have
\begin{equation*}
    \phi(q^{11})\sum_{n\geq0}\overline{p}(11n)(-q)^{11n} \equiv \sum_{n\geq0}r_{10}(11n)q^{11n} \pmod{11}.
\end{equation*}
Replacing $q^{11}$ by $q$, we have
\begin{equation*}
    \phi(q)\sum_{n\geq0}\overline{p}(11n)(-q)^{n} \equiv \sum_{n\geq0}r_{10}(11n)q^{n} \pmod{11}.
\end{equation*}
Up to this point, the above construction follows \cite{wang2014another}. \par
By \cref{rem:hecke-mod}, we have $\sum_{n\geq0}r_{10}(11n)q^{n}\equiv \phi(q)^{10}\mid T_{5,4}(11) \pmod{11}$.
Since $\phi(q)^{10}\in \mathcal{M}_{5}(\Gamma_0(4),\chi_{-4})$, it follows from \cref{prop:hecke-int} that $\phi(q)^{10}\mid T_{5,4}(11)\in \mathcal{M}_{5}(\Gamma_0(4),\chi_{-4})$.
By \cref{thm1}, we may choose a basis of $\mathcal{M}_{5}(\Gamma_0(4),\chi_{-4})$ as follows:
\begin{equation*}
  \begin{aligned}
    &f_0 = F(q)^2\phi(q)^{2} = q^2 + \cdots,\\
    &f_1 = F(q)\phi(q)^{6} = q + 12 q^2 + \cdots,\\
    &f_2 = \phi(q)^{10} = 1 + 20 q + 180 q^2 + \cdots.
  \end{aligned}
\end{equation*}
Expressing $\phi(q)^{10}\mid T_{5,4}(11)$ in terms of the basis above, and reducing modulo 11, we obtain
\begin{equation*}
  \phi(q)^{10}\mid T_{5,4}(11) \equiv \phi(q)\sum_{n\geq0}\overline{p}(11n)(-q)^{n} \equiv F(q)\phi(q)^{6} + \phi(q)^{10} \pmod{11}.
\end{equation*}
Since $\Z/11\Z[[q]]$ is an integral domain, canceling $\phi(q)$ on both sides, we have
\begin{equation*}
  \sum_{n\geq0}\overline{p}(11n)(-q)^{n} \equiv F(q)\phi(q)^{5} + \phi(q)^{9} \pmod{11}.
\end{equation*}
Here $F(q)\phi(q)^{5}+ \phi(q)^{9} \in \mathcal{M}_{9/2}(\Gamma_0(4))$. \par
Let $F(q)\phi(q)^{5} + \phi(q)^{9}=\sum_{n=0}^{\infty} a(n) q^n$. To prove $\overline{p}(11(8n+5))\equiv 0 \pmod{11}$, 
it suffices to prove that $a(8n+5)\equiv 0 \pmod{11}$. Since every $\psi \in \widehat{(\Z/8\Z)^\times}$ 
is a real character, by \cref{prop.1} and \cref{rem2}, we have
\begin{equation*}
  \sum_{n=0}^{\infty} a(8n+5) q^{8n+5} \in \mathcal{M}_{9/2}(\Gamma_0(256)).
\end{equation*}
By \cref{sturm}, it suffices to check that $a(8n+5)\equiv 0 \pmod{11}$ for $8n+5 \leq 289$. By direct computation, we complete the proof. 
\end{proof}
\begin{proof}[Proof of Theorem~\ref{thm:main}(2)]
Consider $\phi(q)^{13}\sum_{n\geq 0}\overline{p}(n)(-q)^n=\phi(q)^{12}$. Applying the same argument as in the proof above, we have
\begin{equation*}
  \phi(q)\sum_{n\geq 0}\overline{p}(13n)(-q)^{n} \equiv \sum_{n\geq 0}r_{12}(13n)q^{n}\equiv \phi(q)^{12}\mid T_{6,4}(13) \pmod{13}.
\end{equation*}
Since $\phi(q)^{12}\in \mathcal{M}_{6}(\Gamma_0(4))$, it follows from \cref{prop:hecke-int} that $\phi(q)^{12}\mid T_{6,4}(13)\in \mathcal{M}_{6}(\Gamma_0(4))$. We may choose a basis of $\mathcal{M}_{6}(\Gamma_0(4))$ as follows:
\begin{equation*}
  \begin{aligned}
    &f_0 = F(q)^3 = q^3 + \cdots,\\
    &f_1 = F(q)^2\phi(q)^4 = q^2 + 8 q^3 + \cdots,\\
    &f_2 = F(q)\phi(q)^8 = q + 16 q^2 + 116 q^3 + \cdots,\\
    &f_3= \phi(q)^{12} = 1 + 24 q + 264 q^2 + 1760 q^3 + \cdots.
  \end{aligned}
\end{equation*}
Expressing $\phi(q)^{12}\mid T_{6,4}(13)$ in terms of the basis above, and reducing modulo 13, we obtain
\begin{equation*}
  \phi(q)\sum_{n\geq 0}\overline{p}(13n)(-q)^{n} \equiv \phi(q)^{12} \mid T_{6,4}(13) \equiv F(q)^2\phi(q)^4 + 4 F(q)\phi(q)^8 + \phi(q)^{12} \pmod{13}.
\end{equation*}
Since $\Z/13\Z[[q]]$ is an integral domain, canceling $\phi(q)$ on both sides, we have
\begin{equation*}
  \sum_{n\geq 0}\overline{p}(13n)(-q)^{n} \equiv F(q)^2\phi(q)^3 + 4 F(q)\phi(q)^7 + \phi(q)^{11} \pmod{13}.
\end{equation*}
Here $F(q)^2\phi(q)^3 + 4 F(q)\phi(q)^7 + \phi(q)^{11} \in \mathcal{M}_{11/2}(\Gamma_0(4))$.\par
Let $F(q)^2\phi(q)^3 + 4 F(q)\phi(q)^7 + \phi(q)^{11}=\sum_{n\geq 0} b(n)q^n$. To prove $\overline{p}(13\times 2^6(8n+7))\equiv 0 \pmod{13}$,
it suffices to prove that $b(2^6(8n+7))\equiv 0 \pmod{13}$. When using \cref{prop.1}, we apply the operator $U(2)$ six times instead of applying $U(2^6)$ directly. 
Since every $\psi \in \widehat{(\Z/8\Z)^\times}$ is a real character, by \cref{prop.1} and \cref{rem2}, we have 
\begin{equation*}
\sum_{n\geq 0} b(2^6(8n+7))q^{8n+7} \in \mathcal{M}_{11/2}(\Gamma_0(512),\chi_2),
\end{equation*}
where $\chi_2(x)=\left(\dfrac{2}{x}\right)$. By \cref{sturm}, it suffices to check that $b(2^6(8n+7))\equiv 0 \pmod{13}$ for $8n+7 \leq 705$. 
By direct computation, we complete the proof. 
\end{proof}
% ------------------------------
\section{Closing remarks}
Numerical computations suggest that the overpartition function $\overline{p}(n)$ satisfies the following conjectural Ramanujan-type congruences:
\begin{equation*}
  \overline{p}(17\times 11^2 n) \equiv 0 \pmod{17},
  \quad \text{where } n \equiv 3 \pmod{8}
  \text{ and }
  \left(\frac{n}{11}\right) = -1.
\end{equation*}
Equivalently, $\overline{p}(17\times 11^2(88n+m)) \equiv 0 \pmod{17}$, for all $n\geq 0$ and $m\in\{19,35,43,51,83\}$.
\begin{equation*}
  \overline{p}(23\times 13^2 n) \equiv 0 \pmod{23},
  \quad \text{where } n \equiv 5 \pmod{8}
  \text{ and }
  \left(\frac{n}{13}\right) = 1.
\end{equation*}
Equivalently, $\overline{p}(23\times 13^2(104n+l)) \equiv 0 \pmod{23}$, for all $n\geq 0$ and $l\in\{29,53,61,69,77,101\}$.\par
These congruences are expected to be provable by the methods developed in this paper. Indeed, we have
\begin{equation*}
  \sum_{n\geq 0}\overline{p}(17n)(-q)^{n}\equiv \sum_{n\geq 0}c(n)q^{n} = 13 F(q)^2\phi(q)^7 + 13 F(q)\phi(q)^{11} + \phi(q)^{15} \pmod{17},
\end{equation*}
\begin{equation*}
  \begin{aligned}
  \sum_{n\geq 0}\overline{p}(23n)(-q)^{n}\equiv \sum_{n\geq 0}d(n)q^{n} = &20 F(q)^5\phi(q) + 17 F(q)^4\phi(q)^5 + 14 F(q)^3\phi(q)^9 +\\
                                                 &5 F(q)^2\phi(q)^{13} + 9 F(q)\phi(q)^{17} + \phi(q)^{21} \pmod{23}. 
  \end{aligned}
\end{equation*}
We have $\sum_{n\geq 0}c(n)q^{n}\in \mathcal{M}_{15/2}(\Gamma_0(4))$ and $\sum_{n\geq 0}d(n)q^{n}\in \mathcal{M}_{21/2}(\Gamma_0(4))$. By \cref{prop.1}, it follows that
\begin{equation*}
 \sum_{n\geq 0}c(11^2(88n+m))q^{88n+m}\in \mathcal{M}_{15/2}(\Gamma_1(340736)),
\end{equation*}
and 
\begin{equation*}
  \sum_{n\geq 0}d(13^2(104n+l))q^{104n+l}\in \mathcal{M}_{21/2}(\Gamma_1(562432)).
\end{equation*}
By \cref{sturm}, it suffices to verify that
\begin{equation*}
    c(11^2(88n+m))\equiv 0 \pmod{17} \text{ for } n\leq 1{,}226{,}649{,}601,
\end{equation*}
and 
\begin{equation*}
d(13^2(104n+l))\equiv 0 \pmod{23} \text{ for } n\leq 3{,}968{,}520{,}193.
\end{equation*}
This verification requires a substantial amount of computation and has not been completed.\par
We also found some other conjectural Ramanujan-type congruences for overpartitions which cannot be proved by the methods in this paper.
\begin{conjecture}
  \begin{equation*}
  \overline{p}(2^4 n) \equiv 0 \pmod{7},
  \quad \text{where } n \equiv 3 \pmod{8}
  \text{ and }
  \left(\frac{n}{7}\right)=1.
\end{equation*}
Equivalently, $\overline{p}(2^4(56n+k)) \equiv 0 \pmod{7}$ for all $n\geq 0$ and $k\in\{11,43,51\}$.
\end{conjecture}
\begin{conjecture}
  \begin{equation*}
  \overline{p}(17^2\cdot 4n) \equiv 0 \pmod{19},
  \quad \text{where } n \equiv 3 \pmod{8},\ 
  \left(\frac{n}{17}\right)=-1,\ 
  \text{and }\left(\frac{n}{19}\right)=1.
\end{equation*}
It involves $72$ residue classes modulo $2584$; we do not list all of them here.
\end{conjecture}
We hope to address these conjectures in future work.

% ------------------------------
\printbibliography

@article{corteel2004overpartitions,
  title={Overpartitions},
  author={Corteel, Sylvie and Lovejoy, Jeremy},
  journal={Transactions of the American Mathematical Society},
  volume={356},
  number={4},
  pages={1623--1635},
  year={2004}
}

@book{ono2004web,
  title={The Web of Modularity: Arithmetic of the Coefficients of Modular Forms and $ q $-series: Arithmetic of the Coefficients of Modular Forms and Q-series},
  author={Ono, Ken},
  number={102},
  year={2004},
  publisher={American Mathematical Soc.}
}

@article{wang2014another,
  title={Another Proof of a Conjecture by Hirschhorn and Sellers on Overpartitions.},
  author={Wang, Liuquan},
  journal={J. Integer Seq.},
  volume={17},
  number={9},
  pages={14--9},
  year={2014}
}

@article{ryan2024explicit,
  title={Explicit families of congruences for the overpartition function},
  author={Ryan, Nathan C and Sirolli, Nicol{\'a}s and Villegas-Morales, Jean Carlos and Zheng, Qi-Yang},
  journal={The Ramanujan Journal},
  volume={65},
  number={4},
  pages={1631--1649},
  year={2024},
  publisher={Springer}
}

@inproceedings{ramanujan1919some,
  title={Some properties of p (n), the number of partitions of n},
  author={Ramanujan, Srinivasa},
  booktitle={Proc. Cambridge Philos. Soc},
  volume={19},
  pages={207--210},
  year={1919}
}

@article{lin2015new,
  title={A new proof of a conjecture of Hirschhorn and Sellers on overpartitions},
  author={Lin, Bernard LS},
  journal={The Ramanujan Journal},
  volume={38},
  number={1},
  pages={199--209},
  year={2015},
  publisher={Springer}
}

@article{hirschhorn2005arithmetic,
  title={Arithmetic relations for overpartitions},
  author={Hirschhorn, Michael D and Sellers, James A},
  journal={J. Combin. Math. Combin. Comput},
  volume={53},
  number={65-73},
  pages={1},
  year={2005}
}

@article{ChenXia2014Overpartitions,
  author    = {Chen, William Y. C. and Xia, Ernest X. W.},
  title     = {Proof of a conjecture of Hirschhorn and Sellers on overpartitions},
  journal   = {Acta Arithmetica},
  volume    = {163},
  year      = {2014},
  pages     = {59--69},
}

@article{ono2000distribution,
  title={Distribution of the partition function modulo m},
  author={Ono, Ken},
  journal={Annals of Mathematics},
  volume={151},
  number={1},
  pages={293--307},
  year={2000},
  publisher={JSTOR}
}

@article{treneer2006congruences,
  title={Congruences for the coefficients of weakly holomorphic modular forms},
  author={Treneer, Stephanie},
  journal={Proceedings of the London Mathematical Society},
  volume={93},
  number={2},
  pages={304--324},
  year={2006},
  publisher={Cambridge University Press}
}

@inproceedings{Lovejoy2010QuadraticFA,
  title={Quadratic Forms and Four Partition Functions Modulo 3},
  author={Jeremy Lovejoy and Robert Osburn},
  booktitle={Integers},
  year={2010}
}

@inproceedings{sturm2006congruence,
  title={On the congruence of modular forms},
  author={Sturm, Jacob},
  booktitle={Number Theory: A Seminar held at the Graduate School and University Center of the City University of New York 1984--85},
  pages={275--280},
  year={2006},
  organization={Springer}
}

@article{radu2011congruence,
  title={Congruence properties modulo 5 and 7 for the pod function},
  author={Radu, Silviu and Sellers, James A},
  journal={International Journal of Number Theory},
  volume={7},
  number={08},
  pages={2249--2259},
  year={2011},
  publisher={World Scientific}
}

@article{atkin1967some,
  title={Some properties of p (n) and c (n) modulo powers of 13},
  author={Atkin, AOL and O'brien, JN},
  journal={Transactions of the American Mathematical Society},
  volume={126},
  number={3},
  pages={442--459},
  year={1967},
  publisher={JSTOR}
}

@article{weaver2001new,
  title={New congruences for the partition function},
  author={Weaver, Rhiannon L},
  journal={The Ramanujan Journal},
  volume={5},
  number={1},
  pages={53--63},
  year={2001},
  publisher={Springer}
}

% =============================================================
%               BibTeX SWITCH (if required by a journal)
% -------------------------------------------------------------
% 1) Comment out the biblatex lines above (\usepackage[...]{biblatex} and \addbibresource{...}).
% 2) Add the following two lines BEFORE \begin{document}:
%    \usepackage[numbers]{natbib}
%    \bibliographystyle{abbrvnat} % or plain, alpha, etc.
% 3) Replace \printbibliography with:
%    \bibliography{references}
% 4) Run: pdflatex → bibtex → pdflatex → pdflatex
% =============================================================

% =============================================================
%          Sample references.bib (copy into a .bib file)
% -------------------------------------------------------------
% 
% =============================================================
\end{document}